\newtheorem{theorem}{Theorem}[section]
\newtheorem{conjecture}[theorem]{Conjecture}
\newtheorem*{coro}{Corollary} %%%% for unnumbered statements
\theoremstyle{definition}
\newtheorem{definition}[theorem]{Definition}
\title[Prime number generation and factor elimination]{Prime number generation and
\newline factor elimination}
\author[Vineet Kumar]
{Vineet Kumar\thanks{Thanks to Abhijit Phatak and Phani Ravi Teja of Indian Institute of Technology, Banaras Hindu University, Varanasi.}}
\begin{document}

\begin{abstract}
We have presented a multivariate polynomial function termed as factor elimination function,by which, we can generate prime numbers. This function's mapping behavior can explain the irregularities in the occurrence of prime numbers on the number line. Generally the different categories of prime numbers found till date, satisfy the form of this function. We present some absolute and probabilistic conditions for the primality of the number generated by this method. This function is capable of leading to highly efficient algorithms for generating prime numbers.
\end{abstract}

\begin{classification}
Number Theory, Prime Numbers

\end{classification}

\begin{keywords}
Generalized Proof of Euclid's Theorem, Prime Generation Algorithms, Prime Number Categorization, Primality Test, Probable Prime, Multivariate polynomial function, Prime Counting Function
\end{keywords}

\maketitle

\section{Introduction}
\begin{definition}
Prime Numbers are those numbers which appear on the number line and are divisible by only 1 and the number itself. Hence these numbers have only two factors.
\end{definition}
In this paper we have presented a multivariate polynomial function termed as factor elimination function which is supposed to generate all prime numbers occurring on the number line. We call it as factor elimination because it generates a number by reducing the divisibility by most of the prime factors, we can generate small or big numbers from the function depending upon the factors and certain values taken under consideration. For the generated number some absolute conditions for primality are given. Probabilistic conditions explain why the image of the function cannot be prime, or could be a prime under particular probability conditions. The reason behind the various categories of prime numbers is also explained by this function. There are two cases, one is assured prime number generation where there is no need to pass primality test. While the second case requires a primality test to be passed and hence there is some definite probability associated.
\section{Generalized Proof of Euclid's Theorem}
\begin{theorem}For any finite set of prime numbers, there exists a prime number not in that set.
\end{theorem}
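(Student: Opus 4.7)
The plan is to mimic Euclid's classical argument but frame it in a way that foreshadows the factor elimination function the paper is about to introduce. Let $S=\{p_1,p_2,\dots,p_n\}$ be an arbitrary finite set of primes. I would form a witness number $N$ that is manifestly coprime to every element of $S$, and then extract a prime factor of $N$ that cannot lie in $S$.

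Concretely, first I would set $P = p_1 p_2 \cdots p_n$ and consider $N = P + 1$. The key step is to verify that $N$ leaves remainder $1$ upon division by each $p_i$: since $p_i \mid P$, we have $N \equiv 1 \pmod{p_i}$, so no $p_i$ divides $N$. Next, since $N \ge 2$, the fundamental theorem of arithmetic guarantees that $N$ has at least one prime divisor $q$. By the previous step $q \ne p_i$ for any $i$, so $q \notin S$, which is exactly what the theorem asserts.

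To reflect the "generalized" framing of the section title and to set up the factor elimination function, I would actually present the construction in a slightly more flexible form: replace $P+1$ by any expression of the shape $N = a\,p_1 p_2 \cdots p_n + b$ where $a \ge 1$ and $\gcd(b,\,p_1 p_2 \cdots p_n)=1$ with $N>1$. The same modular argument shows $p_i \nmid N$ for every $i$, so any prime factor of $N$ lies outside $S$. The classical $a=b=1$ case is then the minimal instance, and the flexibility in $a$ and $b$ is what later lets one vary parameters to generate different primes.

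I do not expect a genuine obstacle here, since this is essentially Euclid's original argument; the only subtlety is rhetorical, namely to phrase the construction so that the coprimality of $N$ with $P$ is front and center, since that coprimality is the seed of the "factor elimination" idea the rest of the paper develops. The main judgment call will be how general to make $N$ in the statement of the proof without overshooting what the theorem actually claims.
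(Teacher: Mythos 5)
Your proposal is correct, and at its core it is the same Euclid-style argument as the paper's: build a number that every prime of $S$ fails to divide, then pull out a prime factor of it. The difference is in the shape of the witness. You keep the full product $P=p_1\cdots p_n$ intact and add a coprime offset, $N=aP+b$ with $\gcd(b,P)=1$, whereas the paper partitions $S$ into two disjoint blocks $A$ and $B$, raises the primes to arbitrary powers, and forms $R=\prod x_i^{a_i}\pm\prod y_j^{b_j}$, arguing that no prime of $A$ (and symmetrically of $B$) can divide $R$. The paper's two-block form is chosen because it is literally the factor elimination function used throughout the rest of the paper, so the proof doubles as motivation for that machinery; your form is less tied to that function but buys rigor at exactly the point the paper glosses over. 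You explicitly require $N>1$ (automatic for $N=P+1$) and then invoke the existence of a prime divisor $q$ of $N$, concluding $q\notin S$; the paper's proof stops after showing no $P_i$ divides $R$ and simply asserts that a new prime exists, without ever noting that $R$ must exceed $1$ or that a prime factor of $R$ must be extracted. In the paper's subtraction case this is a genuine loose end, since the difference of the two sub-products can equal $1$ (for instance $A=\{3,5\}$, $B=\{2,7\}$ gives $R=15-14=1$), in which case that particular $R$ yields no new prime at all. So your version is the tighter proof of the stated theorem, while the paper's version trades that tightness for a construction that directly prefigures its factor elimination function.
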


\begin{coro}
There are infinitely many prime numbers. 
\end{coro}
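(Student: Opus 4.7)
The plan is to adapt Euclid's classical construction in a way that emphasizes the ``factor elimination'' perspective highlighted in the abstract. Given an arbitrary finite set $S = \{p_1, p_2, \ldots, p_n\}$ of primes, I would form the auxiliary integer
\[
N = p_1 p_2 \cdots p_n + 1,
\]
and argue that every prime factor of $N$ must lie outside $S$.

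First I would observe that $N \geq 2$ (taking the empty product to be $1$ when $n = 0$), so by the fundamental fact that every integer greater than $1$ has a prime divisor, $N$ admits at least one prime factor $q$. Next I would fix an arbitrary $p_i \in S$ and check that $p_i \nmid N$: since $p_i$ divides $p_1 p_2 \cdots p_n$ but does not divide $1$, it cannot divide their sum. This is precisely the ``factor elimination'' step, and it is the conceptual heart of the argument: the $+1$ purges every prime already listed in $S$ from the pool of possible divisors of $N$.

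Combining these two observations, the prime factor $q$ guaranteed in the first step necessarily satisfies $q \notin S$, which is exactly the conclusion of the theorem. The corollary then follows at once by contradiction: if the collection of all primes were finite, applying the theorem to that collection would produce a prime outside it, which is absurd.

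The main obstacle is not really computational but expository, since the divisibility step is a one-liner. The point that warrants care is the scope of the hypothesis: $S$ is an \emph{arbitrary} finite set, not required to contain $2$, to be an initial segment of the primes, or even to be nonempty. I would therefore be explicit about the empty-product convention handling the $n = 0$ case, and emphasize that the argument never relies on $S$ being closed under anything — this is what justifies calling the statement a \emph{generalized} Euclid theorem rather than the textbook version.
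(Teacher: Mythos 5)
Your proof is correct, and the corollary does follow from it, but your route is the classical Euclid argument, which in the paper's framework is only the degenerate special case of its construction. The paper partitions the finite set $S$ into two disjoint sets $A$ and $B$, forms the resultant $R = \Pi_{A} \pm \Pi_{B}$ (also allowing arbitrary exponents on the factors), and shows that no prime of $S$ can divide $R$ because any $P_i \in A$ fails to divide $\Pi_B$ and vice versa; your $N = p_1 p_2 \cdots p_n + 1$ is exactly the case $B = \{1\}$ with the plus sign, which the paper explicitly admits as the convention when one of the sets is empty. What the paper's generality buys is a whole family of ``factor elimination'' resultants avoiding every prime of $S$, which motivates its later sections; what your specialization buys is logical completeness on a point the paper glosses over: you explicitly note $N \geq 2$ and invoke the existence of a prime divisor, whereas in the paper's generality $R$ can equal $1$ (e.g.\ $2\cdot 3 - 5 = 1$), in which case that particular resultant has no prime factor and certifies nothing, so the paper's concluding sentence tacitly needs an argument that some admissible choice yields $R > 1$ --- precisely the step your $B = \{1\}$, plus-sign version supplies for free. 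Your remarks about the empty-product convention and the arbitrariness of $S$ are consistent with the paper's intent, so as a proof of the stated theorem and corollary your argument is sound, just narrower than the construction the paper is advertising.
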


\begin{coro}
There is no largest prime number. 
\end{coro}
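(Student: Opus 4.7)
The plan is to follow Euclid's classical argument, framed so that the construction is visibly an instance of the factor elimination idea announced in the introduction. Given any finite set $S = \{p_1, p_2, \ldots, p_n\}$ of primes, I will build an integer $N$ whose every prime divisor is forced to lie outside $S$, and then extract such a prime via the fundamental theorem of arithmetic.

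The concrete construction I would use is $N = p_1 p_2 \cdots p_n + 1$; more generally one may take $N = c \cdot \prod_{i=1}^{n} p_i + d$ for any positive integer $c$ and any $d \geq 1$ coprime to $\prod_i p_i$. This is exactly the factor-elimination shape: multiplying by $\prod_i p_i$ removes every prime of $S$ as a possible divisor, while the offset $d$ prevents them from being reintroduced. The key steps are then: (i) observe $N \geq 2$, so $N$ has at least one prime divisor $q$; (ii) assume for contradiction $q \in S$, so $q = p_j$ for some $j$, whence $q \mid \prod_i p_i$ and $q \mid N$; (iii) subtract to obtain $q \mid d$, contradicting $\gcd(d, \prod_i p_i) = 1$; (iv) conclude $q \notin S$, which proves the theorem. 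The two corollaries then follow immediately: were the set of all primes finite, the theorem applied to that exhaustive list would produce a prime outside it, and were there a largest prime $P$, applying the theorem to $\{2, 3, \ldots, P\}$ would yield a strictly larger prime.

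The main obstacle is not technical, since the divisibility argument collapses to a single line, but conceptual: I need to phrase the construction in a form that is genuinely the \emph{generalized} proof promised in the section title, while still being careful to claim only the existence of \emph{some} prime divisor of $N$ rather than overreaching and asserting that $N$ itself is always prime. The latter would be false in general (for instance $2 \cdot 3 \cdot 5 \cdot 7 \cdot 11 \cdot 13 + 1 = 30031 = 59 \cdot 509$), so the write-up must route the conclusion through the fundamental theorem of arithmetic rather than through primality of $N$ directly.
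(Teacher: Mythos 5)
Your argument is correct, and its engine is the same as the paper's: build an integer out of the given finite set of primes in such a way that none of them can divide it, then extract a prime divisor, which is forced to be new. Where you genuinely diverge is in the decomposition. The paper partitions the set $S$ into two disjoint blocks $A$ and $B$ and forms $R = \prod_A P_i^{a_i} \pm \prod_B O_j^{b_j}$; a prime of $A$ dividing $R$ would have to divide the $B$-product (and symmetrically for $B$), impossible since the blocks are disjoint sets of primes. You instead keep every prime of $S$ inside one product and add a coprime offset, $N = c\prod_i p_i + d$ with $\gcd(d,\prod_i p_i)=1$; the paper's degenerate case $B=\{1\}$ coincides with your $c=d=1$, but neither form subsumes the other, since your $d$ need not be assembled from primes of $S$, while the paper's second block is not required to be coprime-by-hypothesis but only by the disjointness of the partition. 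Your version buys two genuine improvements in rigor: you route the conclusion through the fundamental theorem of arithmetic after checking $N\ge 2$, whereas the paper never excludes the degenerate value $R=1$ (for example $A=\{2\}$, $B=\{3\}$ with subtraction gives $R=1$, which has no prime factor, so that instance of the construction proves nothing); and you explicitly refuse to claim that the constructed number is itself prime, citing $2\cdot 3\cdot 5\cdot 7\cdot 11\cdot 13+1 = 30031 = 59\cdot 509$, a trap the paper avoids asserting but never warns about. The paper's partition form, on the other hand, buys the flexibility (exploited in its later sections) of generating candidates $\prod_A x_i^{a_i} \pm \prod_B y_j^{b_j}$ from two nontrivial blocks, which your single-product-plus-offset form does not directly express.
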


\begin{proof}
Let us assume a set S consisting of prime numbers which is  partitioned into two distinct sets of prime numbers A and B.
\begin{align}
A& = \left\{ {P_1, P_2, P_3,...,P_{i-1},P_i,P_{i+1},...,P_{n-1},P_n}\right\}
\\
B& = \left\{ {O_1, O_2, O_3,...,O_{j-1},O_j,O_{j+1},...,O_{m-1},O_m}\right\}
\end{align}
Where $S=A\cup$B, $A\cap$B = $\emptyset$ and n and m are some positive integers. Consider the following mathematical operation defined as R
\begin{align}
R& = (P_1\times P_2\times ...\times P_i\times ... \times P_n) \pm (O_1\times O_2\times ...\times O_j\times ...\times O_m)
\end{align}
Let us now assume that $P_i$ is a prime factor of R. Let W = $R\div P_i$. Thus clearly W should be an integer,
\begin{align}
W&= [(P_1\times ...\times P_i\times ... \times P_n) \pm (O_1\times ...\times O_j\times ...\times O_m)]\div P_i
\\
W&= (P_1\times ...\times P_i\times ...\times P_n) \pm (O_1\times  ...\times O_j\times ...\times O_m)\div P_i
\end{align}
But the term, $(O_1\times  ...\times O_j\times ...\times O_m)\div P_i$ can never be a whole number because, $P_i$ does not belong to set B, and a prime number cannot be factor of any other prime number. Hence by contradiction, it is proved that $P_i$ can never be a prime factor of R.

Similarly, Let us consider each Prime number $P_i$ to be raised to the power $a_i$ and each $O_j$ raised by power $b_j$. Then the resultant:
\begin{align}
R& = (P^{a_1}_1\times ...\times P^{a_i}_i\times ... \times P^{a_n}_n) \pm (O^{a_1}_1\times  ...\times O^{a_j}_j\times ...\times O^{a_m}_m)
\end{align}
Again, let us assume that $P_i$ is a prime factor of R then let W = $R\div P_i$. Thus W should be an integer which means,
\begin{align}
W&= [(P^{a_1}_1\times ...\times P^{a_i}_i\times ... \times P^{a_n}_n) \pm (O^{a_1}_1\times  ...\times O^{a_j}_j\times ...\times O^{a_m}_m)]\div P_i
\\
W&= (P^{a_1}_1\times  ...\times P^{a_i-1}_i\times ...\times P^{a_n}_n) \pm (O^{a_1}_1\times  ...\times O^{a_j}_j\times ...\times O^{a_m}_m)\div P_i
\end{align}
The term, $(O^{a_1}_1\times  ...\times O^{a_j}_j\times ...\times O^{a_m}_m)\div P_i$ can never be a whole number because, $P_i$ does not belong to set B. Thus we similarly have proved, by contradiction, that $P_i$ can never be a prime factor of R.
This proof of contradiction shows that at least one additional prime number exists that doesn't belong to set S.
\end{proof}

The important thing to consider here is that if in some case A or B are empty sets,  then 1 must be considered as the only element of that set for e.g A=$\left\{2,3\right\}$ and B=$\left\{1\right\}$.
\section{Theory of Factor Elimination}
For two distinct sets A and B consisting of prime numbers, let P be the largest prime number in either of the sets.
\begin{align}
A\cap B &= \emptyset
\\
A\cup B &= S
\end{align}
Let $x_i\in A$ and $y_i\in B$; then for
\begin{align}
R&= \mid\Pi x^{a_i}_i \pm \Pi y^{b_j}_j\mid
\end{align}
Let us name this function as the Factor Elimination function. We generate a pair of resultants; $R^+$ by addition and $R^-$ by subtraction. The probability that R is prime is very high and it must be prime if,
\begin{align}
\sqrt{R}&\le P
\end{align}
But practically, we realize that it is very difficult to verify $\sqrt{R}\le$ P, and highly efficient algorithms are required. In that case, we can depend on the probability that, R is most likely a prime number, and we can determine it by the primality test like the Rabin-Miller Probabilistic Primality Test [2].  It is well known that if a prime factor of R exists other than R itself, then at least one of those prime factors must be less than $\sqrt{R}$ [3].
\section{Probability for being a Prime Number}
Any prime number that is less than or equal to P, cannot be a factor of R. If $\sqrt{R}\le$ P and C = $\sqrt{R}$, then let the number of prime numbers which may be prime factors of R lying in between P and C be denoted by N. For this, we can use the prime counting function[4] and hence the number of primes capable of dividing R is given by
\begin{align}
N=\pi(C)- \pi(P)
\end{align}
Where N represents the exact number of primes that exist between P and C. Instead, we can also use rough approximation by Prime Number Theorem[5] for calculating N represented by symbol $N^{\circ}$.
\begin{align}
N^{\circ}  = \frac{C}{ln(C)}-\frac{P}{ln(P)}
\end{align}
As the value of R increases, the value of C also increases correspondingly, and the gap between P and C widens on the number line as a result of which the value of N also increases.
This implies that the probability that a number could be a factor of R increases. So, it was concluded that, the closer R is to $P^2$, the probability of R being a prime number increases.
The total number of primes not greater than C which is $\sqrt{R}$ is equal to $\pi(C)$. Thus N represents the primes which can be possible factors of R. We can conclude from here that, the probability for event X where X is defined as divisibility of R by set of primes comprising of N elements.
\begin{align}
P(X)=\frac{N}{\pi(C)}= \frac{\pi(C)- \pi(P)}{\pi(C)}= 1- \frac{\pi(P)}{\pi(C)}
\end{align}
\begin{align}
1-P(X)&= \frac{\pi(P)}{\pi(C)}
\end{align}

Where 1-P(X) represent the probablity of R to be a prime and for the case $P\ll$C this probability tends to zero.
Let us suppose, we do not choose some prime between 1 and P. Let T be the set of such prime numbers. Consider R(P) as the residual prime function which counts the number of primes in T. Now the above equation (16) can be written as following
\begin{align}
1-P(X)&= \frac{\pi(P)-R(P)}{\pi(C)}
\end{align}
For e.g
\begin{align}
A&=\left\{2,3\right\}, B=\left\{1\right\}
\\
R&= 2^2 \times 3^2-1 = 36 -1 = 35
\\
C&\approx 5.916, \pi(C)=3, \pi(P)=2, R(P)=1
\end{align}
 Let 5 be an element of the set  T. We must not consider those values of R where the difference or addition of last digits from both the multiplied result sets A and B is divisible by 5. We add an exception to accommodate R=5. For instance, in the example above 6 and 1 are last digits for 36 and 1 respectively. 
 \\The consideration of the value of R(P) is very important when  we talk of prime number generation algorithms and it should be minimum. Additionally the set T should consist of larger prime numbers only.
\section{Twin Prime Conjecture}
\begin{definition}
A twin prime is a prime number that differs from another prime number by two.
\end{definition}
\begin{conjecture}
There Are Infinitely Many Prime Twins.
\end{conjecture}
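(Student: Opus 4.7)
The plan is to specialize the factor elimination function from Section 3 by taking $B = \{1\}$, so that the two resultants become $R^+ = \prod x_i^{a_i} + 1$ and $R^- = \prod x_i^{a_i} - 1$, automatically satisfying $R^+ - R^- = 2$. Any input for which both $R^+$ and $R^-$ are prime therefore yields a twin prime pair, so the conjecture reduces to producing infinitely many such inputs.

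First, I would build an infinite family of candidate inputs. For each $k$, let $A_k$ consist of the first $k$ primes (possibly excluding a few small primes, which I would place in the set $T$ of Section 4 to control parity and residue obstructions). By the generalized proof of Euclid's theorem in Section 2, no prime in $A_k$ divides either $R^+$ or $R^-$, so every prime factor of either resultant lies strictly above $P = \max A_k$. The exponent tuple $(a_1, \ldots, a_k)$ would then be chosen to push $R^{\pm}$ into a regime where the absolute primality condition $\sqrt{R^{\pm}} \le P$ holds or, failing that, where the probabilistic bound of Section 4 forces both to be prime with controllable density.

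The central step is to show that the joint probability of $R^+$ and $R^-$ being simultaneously prime, aggregated over an unbounded sequence of inputs, produces infinitely many twins. Using the bound $1 - P(X) = \pi(P)/\pi(C)$ from Section 4 and treating the primality of $R^+$ and $R^-$ as approximately independent, the heuristic joint probability is of order $\bigl(\pi(P)/\pi(C)\bigr)^{2}$; summing this across the family diverges, which is exactly the heuristic prediction of the conjecture. The task is to upgrade this heuristic to a rigorous lower bound on the count of tuples at which both resultants simultaneously avoid every prime divisor in the interval $(P, \sqrt{R})$.

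The main obstacle, and the reason this conjecture has remained open, is the rigorous handling of the correlation between prime factors of $R^+$ and $R^-$ in the intermediate range. Although the factor elimination framework removes small primes from both resultants uniformly, divisibility by a prime $q \in (P, \sqrt{R})$ is linked through $R^+ \equiv R^- + 2 \pmod{q}$, so the two events are not literally independent. Converting the heuristic density into an unconditional count would require a genuine sieve input, in the spirit of Brun, Selberg, or the bounded-gaps work of Zhang and Maynard--Tao, adapted to the multiplicative parameterization supplied by the factor elimination function. Absent such a sieve input, the strongest honest outcome of this plan is a conditional statement under an Elliott--Halberstam-type hypothesis, with the unconditional twin prime conjecture left open and consistent with the probabilistic picture developed above.
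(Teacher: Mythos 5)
Your proposal takes essentially the same route as the paper, which is important to recognize: the paper does not prove this statement at all --- it is stated as a conjecture, and the paper's entire treatment consists of the specialization $R = \mid\Pi x^{a_i}_i \pm 1\mid$ (i.e.\ $B=\{1\}$, so $R^+ - R^- = 2$) together with the remark that the probability of a twin pair is the square of the single-prime probability from the $1-P(X) = \pi(P)/\pi(C)$ analysis of Section 4. Your first two steps reproduce exactly this heuristic. Where you go beyond the paper is in honestly diagnosing why the heuristic cannot be upgraded to a proof: summing a heuristic density does not count anything, the events ``$R^+$ prime'' and ``$R^-$ prime'' are correlated through $R^+ \equiv R^- + 2 \pmod{q}$ for intermediate primes $q \in (P,\sqrt{R})$, and an unconditional count would require genuine sieve input in the style of Brun, Selberg, or Zhang--Maynard--Tao, none of which the factor elimination framework supplies. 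So there is no gap in your proposal \emph{relative to the paper} --- the paper proves nothing here either --- but neither text contains a proof of the conjecture, and your explicit acknowledgement that the outcome is at best conditional is the correct assessment; the statement remains open, and you should not present the squared-probability argument as anything more than the same heuristic the paper already records.
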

Considering large pair of twin primes, we can say that the probability of getting a twin prime is square of the probability that we get a prime at $P\ll$C. 
\begin{align}
R&= \mid\Pi x^{a_i}_i \pm 1\mid
\end{align}
The important thing to notice here is that the probability of getting a pair of resultant as prime is equal in this case. 
\section{Classification of Prime Numbers}
This factor elimination function is also useful for giving a general mathematical form for most of the various classifications of prime numbers given till date. This is presented in a tabular form given below.
\begin{center}
\begin{longtable}{|p{2cm}|p{2.6cm}|p{2.3cm}|p{4cm}|}
\caption[Categorization of Prime Numbers]{Categorization of Prime Numbers} \label{grid_mlmmh} \\

\hline \multicolumn{1}{|c|}{\textbf{\parbox{2cm}{Prime\\Category}}} & \multicolumn{1}{c|}{\textbf{Initial Form}} &
\multicolumn{1}{c|}{\textbf{Simplified Form}} & \multicolumn{1}{c|}{\textbf{\parbox{4cm}{Factor Elimination\\Criteria}}} \\ \hline 
\endfirsthead

\multicolumn{3}{c}%
{{\bfseries \tablename\ \thetable{} -- continued from previous page}} \\
\hline\multicolumn{1}{|c|}{\textbf{\parbox{2cm}{Prime\\Category}}} & \multicolumn{1}{c|}{\textbf{Initial Form}} &
\multicolumn{1}{c|}{\textbf{Simplified Form}} & \multicolumn{1}{c|}{\textbf{\parbox{4cm}{Factor Elimination\\Criteria}}} \\ \hline  
\endhead

\hline \multicolumn{4}{|r|}{{Continued on next page}} \\ \hline
\endfoot

\hline \hline
\endlastfoot

Carol primes               & $(2^n$-$ 1)^2$ -– 2    & $(2^n$-$ 1)^2$ -– 2                 &  $R^+$, A = $\left\{2^n-1\right\}$, B=$\left\{2\right\}$, $a_1$= 2                          \\ \hline
Centered decagonal primes               &     $5(n^2$-$   n)$ + 1       &       5n(n - 1) + 1           &   $R^+$, A = $\left\{5,n,n-1\right\}$, B=$\left\{1\right\}$                             \\ \hline
       Centered heptagonal primes        &        $\frac{(7n^2 -  7n + 2)}{2}$     &   7n(n-1)$\div$ 2 + 1              &$R^+$,  A=\{7,n/2,n-1\} if n is even, A=\{7,n,(n-1)/2\}  if n is odd, B=\{1\}                         \\ \hline
          Centered square primes     & $n^2$ + $(n+1)^2$             & 2n(n+1)+1                & $R^+$, A=\{2,n,n+1\}, B=\{1\}                            \\ \hline
        Centered triangular primes       &   $\frac{(3n^2 + 3n + 2)}{2}$           & 3n(n+1)/2 + 1                &  $R^+$, A=\{3,n/2,n+1\} if n is even else A=\{3,n,(n+1)/2\}  if n is odd                           \\ \hline
       \parbox{2cm}{Cuban primes\\(Case I)}       &    $\frac{m^3-n^3}{m-n}$, m = n+1          &     3n(n+1)+1            &    $R^+$, A=\{3,n,n+1\}, B=\{1\}                      \\ \hline
       \parbox{2cm}{Cuban primes\\(Case II)}     &       $\frac{m^3-n^3}{m-n}$, m = n+2       & 3n(n+2)+$2^2$                & $R^+$, A=\{3,n,n+2\}, B=\{2\}, $b_1$=2                      \\ \hline
         Cullen primes      &      $n\times 2^n$ + 1        &         $n\times 2^n$ + 1          &    $R^+$, A=\{2,n\}, B=\{1\}, $a_1$=n                             \\ \hline
        Double factorial primes       &     $ n!! \pm 1 $       &      $\Pi x^{a_i}_i \pm 1$         &      R, A=S, B=\{1\}, $a_i\geq 2$ $\forall$ $P_i$ $\in$ S             \\ \hline
          Double Mersenne primes     &     $2^{2^{p -− 1}}$ -– 1         &       $2^{2^{p -− 1}}$ – 1               &   $R^-$,      A=\{2\}, B=\{1\}, $a_1= 2^{p-1}$ where p is some Prime                      \\ \hline
         Eisenstein primes without imaginary part      &    3n-1          &       3n-1          &      $R^-$,     A=\{3,n\}, B=\{1\}                   \\ \hline
         Euclid primes      &       $p_n \texttt{\#} $ + 1       &      $\Pi p_i  $ + 1           &    $R^+$, A=S, B=\{1\}                        \\ \hline
          Factorial primes     &     $ n! \pm 1 $          &   $\Pi x^{a_i}_i \pm 1$                &   R, A=S, B=\{1\}, $a_i\geq 2$ $\forall$ $P_i$ $\in$ S                             \\ \hline
         Fermat primes      &       $2^{2^n}$ -– 1         &             $2^{2^n}$ -– 1      &        $R^-$,      A=\{2\}, B=\{1\}, $a_1= 2^n$, where n is Positive Integer                      \\ \hline
       Fibonacci primes        &          $p_n \texttt{\#} $ + $P_m$        &      $\Pi p_i  $ + $P_m$             &        $R^+$, A=S-\{$P_m$\}, B=\{$P_m$\}, Where $P_m$ is maximum prime                  \\ \hline
       Gaussian primes        &     4n+3         &  $2^2$n+3               &   $R^+$,  A=\{2,n\}, B=\{3\}, $a_1$=2                        \\ \hline
           Generalized Fermat primes base 10    &    $10^n$ + 1          &           $2^n 5^n$+1      &         $R^+$, A=\{2,5\},B=\{1\}, $a_1=a_2=n$                     \\ \hline
         Kynea primes      &        $(2^n + 1)^2$ -− 2      &      $(2^n + 1)^2$ -− 2            & $R^-$, A = \{$2^n$+1\} , B=\{2\}, $a_1$ = 2                  \\ \hline
      Leyland primes         &      $m^n +n^m$        &       $m^n +n^m$           &  $R^+$, A=\{n\}, B=\{m\}, $a_i=m$, $b_1=n$                     \\ \hline
         Mersenne primes      &     $2^p -− 1$         &    $2^p -– 1$             & $R^-$, A=\{2\},B=\{1\}, $a_1=n$, Where p is some prime                     \\ \hline
        Odd primes       &         2n − 1     &        2n − 1         &   $R^-$, A=\{2,n\}, B=\{1\}                   \\ \hline
        Palindromic wing primes       &   $\frac{a(10^m-1)}{9}\pm b*10^\frac{m}{2}$           &     $\frac{a(10^m-1)}{9}\pm b*10^\frac{m}{2}$            & R, A=\{ $\frac{a(10^m-1)}{9}$\} B=\{b,$10^\frac{m}{2}$\}                      \\ \hline
       Pierpont primes        &      $2^u3^v$ + 1        &       $2^u3^v$ + 1          & R, A=\{2,3\}, B=\{1\}, $a_1$=u, $a_2=v$                   \\ \hline
       Primes of the form n4 + 1        &       $n^4$ + 1       &    $n^4$ + 1              & $R^+$, A=\{n\}, B=\{1\}, $a_1$=4                         \\ \hline
       Primorial primes        &      $p_n \texttt{\#}  \pm 1 $        &   $ \Pi p_i \pm 1$          &      R, A=S, B={1}                       \\ \hline
         Proth primes      &  $k \times 2n + 1$, With odd k and k < 2n            &       $k \times 2n + 1$          & $R^+$,        A={2,k,n}, B={1} , With odd k and k < 2n                    \\ \hline
      Pythagorean primes         &       4n + 1       &   $2^2n$+1              & $R^+$, A=\{2,n\}, B=\{1\}                      \\ \hline
       Quartan primes        &  $x^4 + y^4$            &   $x^4 + y^4$              & $R^+$, A=\{x\}, B=\{y\}, $a_1= b_1=4$                      \\ \hline
         Solinas primes      &     $2^a \pm 2^b \pm 1$         &    $2^a \pm 2^b \pm 1$             &   R, A=\{$2^a$\}, B=\{$2^b \pm 1$\}                   \\ \hline
        Star primes       &     6n(n -− 1) + 1         &  $2 \times 3 \times n (n −- 1)$ + 1                & $R^+$, A=\{2,3,n,n-1\}, B=\{1\}                         \\ \hline
       Thabit number primes        &  $3\times 2n$ − 1            &       $3\times 2n$ -− 1           &  $R^-$, A=\{2,3\}, B=\{1\}, $a_1=n$                      \\ \hline
         Woodall primes      &   $n\times 2^n – 1$           &   $n\times 2^n – 1$              &   $R^-$, A=\{2,n\}, B=\{1\}, $a_1=n$                          \\ \hline

\end{longtable}
\end{center}
\begin{table}[h]
\caption{Categorization which depends on occurrence of more than one prime.}
\centering
\begin{tabular}{|p{3cm}|p{4cm}|p{4cm}|}
\hline
Property & Meaning &  Factor Elimination Criteria\\ \hline
    Twin primes     &   (p, p+2) are both prime.      & $R= \mid\Pi x^{a_i}_i \pm 1\mid$  \\ \hline
      Sexy primes   &   (p, p+6) are both prime      &  $R= \mid\Pi x^{a_i}_i \pm 3\mid$\\ \hline
     Sophie Germain prime    &     p and 2p+1 are both prime    & R, A=\{2,p\}, B=\{1\} \\ \hline
    Safe prime     &   \parbox{4cm}{p and (p−-1)/2 are both prime\\ Consider p = 2k+1}      & R, A=\{2,k\}, B=\{1\} \\ \hline
     Prime triplets   &  \parbox{4cm}{(p, p+2, p+6) or (p, p+4, p+6) are all prime}      &  \parbox{4cm}{$R= \mid\Pi x^{a_i}_i \pm y\mid$, y =1,3,5\\Where at least one of the R is having divisibility  by 3}\\ \hline
     Prime quadruplets    &  (p, p+2, p+6, p+8) are all prime       &  \parbox{4cm}{$R= \mid\Pi x^{a_i}_i \pm y\mid$, y =1,3,5\\Where at least one of the R is having divisibility  by 3} \\ \hline
     Primes in residue classes    & an + d for fixed a and d  & $R^+$, A=\{a,n\}, B=\{d\} \\ \hline
\end{tabular}
\end{table}
\section{Probability Comparison of a Simple Algorithm }
Let us now compare the usefulness of this function. With the of simplest algorithms used for generating prime numbers for public key encryption. Considering the Factor Elimination Function,
\begin{align}
R&= \mid\Pi x_i \pm \Pi y_j\mid
\end{align}

$a_i$ and $b_j$ had been considered unity and an algorithm had been created, that can be briefly described as:
\\
\\
1.	Two random lists were created with the help of all possible combinations of elements that are in the sets A and B.
\\
2.	All the elements of A were multiplied. Similarly all elements of B were multiplied. These were saved as two separate results.
\\
3.	The two results were either subtracted or added, and the absolute values were taken.
\\
4.	The final resultants were verified by the Primality Test.
\\
\\
The probability of generating an n-bit prime with this method was high as compared to earlier methods. In the previous implementation the probability that a number is prime is $\frac{1}{ln(2^n)}$ [7]. All possible combinations for a consecutive list of prime numbers of size L were analyzed. As the value of L was increased, the total number of combinations and the total number of prime numbers thus generated was calculated.
The data obtained is tabulated below:

% Table generated by Excel2LaTeX from sheet 'Sheet7'
\begin{table}[h]
  \centering
  \caption{Primes Distribution}
    \begin{tabular}{|p{3cm}|p{3cm}|p{2.5cm}|p{2.5cm}|}
    \hline
    Total Primes (P) & Combinations (C) & $\log_2(C)$    & Ratio ($\frac{P}{C}$) \\ \hline
    1     & 1     & 1     & 1 \\ \hline
    7     & 4     & 2     & 1.75 \\ \hline
    25    & 16    & 4     & 1.563 \\ \hline
    79    & 64    & 6     & 1.234 \\ \hline
    256   & 256   & 8     & 1 \\ \hline
    887   & 1024  & 10    & 0.866 \\ \hline
    2808  & 4096  & 12    & 0.686 \\ \hline
    10405 & 16384 & 14    & 0.635 \\ \hline
    34450 & 65536 & 16    & 0.526 \\ \hline
    120504 & 262144 & 18    & 0.46 \\ \hline
    418223 & 1048576 & 20    & 0.399 \\ \hline
    1597836 & 4194304 & 22    & 0.381 \\ \hline
    5926266 & $1.70\times 10^7 $ & 24    & 0.353 \\ \hline
    $2.10\times10^7$ & $6.70\times 10^7$ & 26    & 0.318 \\ \hline
    $7.70 \times 10^7$ & $2.70 \times 10^8$ & 28    & 0.288 \\ \hline
    \end{tabular}%
  \label{tab:addlabel}%
\end{table}%
For instance, consider a prime list of length L=25. Then it can generate a prime number which is in the range of 42 to 119 bit length. The probability for generating that minimum bit length of prime number can be supposed to be $\frac{1}{ln(2^{42})}$ which is approximately equal to 0.0121 where the obtained probability was 0.3532.
\begin{table}[htbp]
  \centering
  \caption{Probability Comparison}
    \begin{tabular}{|r|r|r|r|r|r|}
    \hline
    List Length (L)    & Max Bit (g)    & Min Bit (h)    & $\frac{1}{ln(2^g)}$ & $\frac{1}{ln(2^h)}$ & Probablity ($\frac{P}{C}$) \\ \hline
    11    & 38    & 11    & 0.038 & 0.1312 & 0.8662 \\ \hline
    13    & 49    & 17    & 0.0294 & 0.0849 & 0.6855 \\ \hline
    15    & 59    & 21    & 0.0245 & 0.0687 & 0.6351 \\ \hline
    17    & 70    & 22    & 0.0206 & 0.0656 & 0.5257 \\ \hline
    19    & 81    & 31    & 0.0178 & 0.0465 & 0.4597 \\ \hline
    21    & 94    & 32    & 0.0153 & 0.0451 & 0.3988 \\ \hline
    23    & 104   & 40    & 0.0139 & 0.0361 & 0.381 \\ \hline
    25    & 119   & 42    & 0.0121 & 0.0343 & 0.3532 \\ \hline
    27    & 126   & 52    & 0.0114 & 0.0277 & 0.3178 \\ \hline
    29    & 143   & 55    & 0.0101 & 0.0262 & 0.2877 \\ \hline
    \end{tabular}%
  \label{tab:addlabel}%
\end{table}%
\begin{figure}
\centering 
\includegraphics[width=0.7\linewidth]{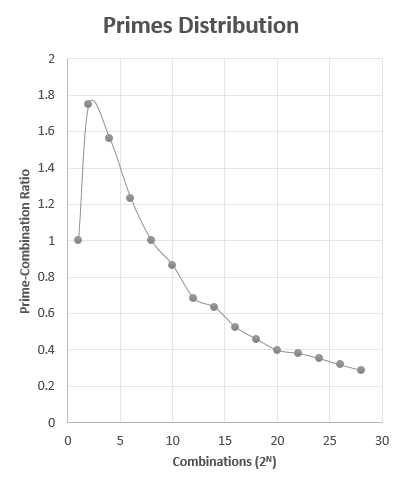}
\caption{Prime Probability Distribution }
\label{fig:graph}
\end{figure}
\\
\\
\\
\\

\section{Conclusion}
Factor elimination function is capable of generating all the prime numbers as well as can be used as a powerful tool for developing highly efficient prime numbers generating algorithms. This function is a multivariate polynomial function. Because every integer on number line can be represented as the sum or difference of two integers and these integers can be written in the form of multiplication of their factors. These generated numbers do not follow a regular pattern or a sequence under the given probabilistic condition for being a prime. With the help of prime counting function, we can explain this finite probability. Here, we have explained that most of the categorization of prime numbers discovered till now, are in some form following factor elimination function.
\section{Future Scope}
As we have demonstrated the application of factor elimination function in generating large prime numbers for encryption. A lot of further research in number theory and prime numbers can be done with the help of this method.
\section{References}

[1] James Williamson (translator and commentator), The Elements of Euclid, With Dissertations, Clarendon Press, Oxford, 1782, page 63.
 \newline[2] Rabin, Michael O. (1980), "Probabilistic algorithm for testing primality", Journal of Number Theory 12 (1): 128–138, doi:10.1016/0022-314X(80)90084-0
\newline[3] Crandall, Richard; Pomerance, Carl (2005), Prime Numbers: A Computational Perspective (2nd ed.), Berlin, New York: Springer-Verlag, ISBN 978-0-387-25282-7
\newline[4] Bach, Eric; Shallit, Jeffrey (1996). Algorithmic Number Theory. MIT Press. volume 1 page 234 section 8.8. ISBN 0-262-02405-5.
\newline[5] N. Costa Pereira (August–September 1985). "A Short Proof of Chebyshev's Theorem". American Mathematical Monthly 92 (7): 494–495. doi:10.2307/2322510. JSTOR 2322510.
\newline[6] Arenstorf, R. F. "There Are Infinitely Many Prime Twins." 26 May 2004. http://arxiv.org/abs/math.NT/0405509.
\newline
[7] Hoffman, Paul (1998). The Man Who Loved Only Numbers. Hyperion. p. 227. ISBN 0-7868-8406-1.5

%\frenchspacing

\end{document}